\title{Semi-perfect 1-Factorizations of the Hypercube}
\author{Natalie C. Behague 
		\thanks{Supported by an EPSRC doctoral studentship.}
		\\
		\small School of Mathematical Sciences,\\[-0.8ex]
		\small Queen Mary University of London,\\[-0.8ex]
		\small London E1 4NS, UK}
\newtheorem{theorem}{Theorem}
\newtheorem{corollary}[theorem]{Corollary}
\newtheorem{prop}[theorem]{Proposition}
\newtheorem{question}{Question}
\theoremstyle{remark}
\newtheorem*{claim}{Claim}
\newcommand{\GM}{G[\mathcal{M}]}
\DeclareMathOperator{\sign}{sgn}
\begin{document}
\maketitle

\begin{abstract}
A 1-factorization $\mathcal{M} = \{M_1,M_2,\ldots,M_n\}$ of a graph $G$ is called perfect if the union of any pair of 1-factors $M_i, M_j$ with $i \ne j$ is a Hamilton cycle. It is called $k$-semi-perfect if the union of any pair of 1-factors $M_i, M_j$ with $1 \le i \le k$ and $k+1 \le j \le n$ is a Hamilton cycle.

We consider 1-factorizations of the discrete cube $Q_d$. There is no perfect 1-factorization of $Q_d$, but it was previously shown that there is a 1-semi-perfect 1-factorization of $Q_d$ for all $d$. 
 Our main result is to prove that there is a $k$-semi-perfect 1-factorization of $Q_d$ for all $k$ and all $d$, except for one possible exception when $k=3$ and $d=6$.
This is, in some sense, best possible.

We conclude with some questions concerning other generalisations of perfect 1-factorizations.
\end{abstract}

Mathematics Subject Classification: 05C70.

Keywords: Factorization; Hypercube; Semi-perfect; Hamilton cycles.

\section{Introduction}

A 1-factorization of a graph $H$ is a partition of the edges of $H$ into disjoint perfect matchings $\{M_1,M_2,\ldots,M_n\}$, also known as 1-factors. 
Let $\mathcal{M} = \{M_1,M_2,\ldots,M_n\}$ be such a 1-factorization.
We say that $\mathcal{M}$ is a perfect factorization if every pair $M_i \cup M_j$ with $i,j$ distinct forms a Hamilton cycle. 
A 1-factorization  $\mathcal{M}$ is called \emph{semi-perfect} if $M_1 \cup M_i$ forms a Hamilton cycle for all $i \ne 1$. 

Kotzig \cite{Kotzig} conjectured that the complete graph $K_{2n}$ has a perfect 1-factorization for all $n \ge 2$. 
This has long been outstanding and has so far only been shown to hold for $n$ prime and $2n - 1$ prime (independently by Anderson and Nakamura \cite{Anderson, Nakamura}), as well as certain other small values of $n$ (see \cite{Wallis} for references).

The existence or non-existence of perfect or semi-perfect 1-factorizations has been studied for various other families of graphs, in particular for the hypercube $Q_d$, for $d \ge 2$.
The hypercube graph $Q_d$ has vertices the subsets of $\{1,2,\ldots,d\}$ and two vertices joined by an edge if they differ in a single element.

We say a vertex of $Q_d$ is even if the set contains an even number of elements, and odd if not. Note that every edge of $Q_d$ goes from an odd vertex to an even vertex and so $Q_d$ is bipartite with one vertex class of odd vertices and one vertex class of even vertices, each of size $2^{d-1}$. 

We say an edge is in direction $i$ if its two endpoints differ in element $i$. This allows us to define some natural 1-factors of $Q_d$, called the \emph{directional matchings}: for each direction $i = 1,\ldots,d$ let $D_i$ be all edges in direction $i$. The collection of all directional matchings is a 1-factorization of $Q_d$, and note that the union of any pair $D_i \cup D_j$, with $i,j$ distinct, is a disjoint union of 4-cycles. Thus any perfect or semi-perfect 1-factorization of $Q_d$ must be  in some sense far from this.

Craft \cite{Craft} conjectured that for every integer $d \ge 2$ there is a semi-perfect 1-factorization of $Q_d$. This was proved independently by Gochev and Gotchev \cite{Gochev} and by Kr\'{a}lovi\v{c} and Kr\'{a}lovi\v{c} \cite{Kralovic} in the case where $d$ is odd, and settled for $d$ even by Chitra and Muthusamy \cite{Chitra}.

Gochev and Gotchev in fact went further and defined $\mathcal{M}$ to be $k$-semi-perfect if $M_i \cup M_j$ forms a Hamilton cycle for every $1\le i \le k$ and $k+1 \le j \le d$. 
They proved that there is a $k$-semi-perfect factorization of $Q_d$ whenever $k$ and $d$ are both even with $k<d$. 

This leads us to wonder how close to a perfect factorization we can get. Is there a k-semi-perfect factorization of $Q_d$ for all $k<d$? Is there a perfect factorization of $Q_d$? If not, what is the maximal number of pairs of 1-factors whose union is a Hamilton cycle? 
Let us introduce some definitions.

For a 1-factorization $\mathcal{M}= \{M_1,M_2,\ldots,M_d\}$ of $Q_d$, we define a graph $G[\mathcal{M}]$
 with vertices labelled 
$M_1, \ldots, M_d$ and an edge between $M_i$ and $M_j$ if $M_i \cup M_j$ is a Hamilton cycle on $H$.
Note that the definitions above can be easily restated using $\GM$: 
$\mathcal{M}$ is perfect if $\GM$ is complete,
 $\mathcal{M}$ is semi-perfect if $\GM$ contains $K_{1,d-1}$ as a subgraph, 
 and $\mathcal{M}$ is $k$ semi-perfect if $\GM$ contains $K_{k,d-k}$ as a subgraph.

With this new notation, we can rephrase our questions and ask what is the maximal number of edges that $\GM$ can contain if $\mathcal{M}$ is a 1-factorization of $Q_d$? Which graphs can $\GM$ be isomorphic to?
It is in fact not possible for $\GM$ to be complete when $d > 2$ (i.e. $\mathcal{M}$ cannot be perfect). More than this, we can show that $\GM$ must be bipartite.

\begin{theorem}[\cite{Laufer}] \label{thm:Bipartite}
Let $H$ be a bipartite graph on two vertex classes each of size $n$, where $n$ is even. 
Let $\mathcal{M}$ be a partition of $H$ into perfect matchings. Then $\GM$ must be bipartite.
\end{theorem}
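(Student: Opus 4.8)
The plan is to exhibit an explicit proper $2$-colouring of the vertices of $\GM$; the existence of such a colouring is exactly the statement that $\GM$ is bipartite. Fix once and for all an enumeration of the two vertex classes of $H$ as $A = \{a_1,\ldots,a_n\}$ and $B = \{b_1,\ldots,b_n\}$. Any perfect matching $M$ of $H$ matches each $a_k$ to a unique $b_\ell$, so it determines a permutation $\sigma_M \in S_n$ via $\sigma_M(k) = \ell$ whenever $a_k b_\ell \in M$. I would colour each matching $M \in \mathcal{M}$ by $\sign(\sigma_M) \in \{+1,-1\}$ and then show this colouring is proper on $\GM$.

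The core step is to relate the component structure of the union of two matchings to a permutation. Given $M_i, M_j \in \mathcal{M}$, consider a walk in $M_i \cup M_j$ that alternately traverses an edge of $M_i$ and an edge of $M_j$. Starting at $a_k$, the $M_i$-edge leads to $b_{\sigma_i(k)}$ and the subsequent $M_j$-edge leads back to the $A$-side at $a_{\tau(k)}$, where $\tau = \sigma_j^{-1}\sigma_i$. Hence the cycles of the subgraph $M_i \cup M_j$ are in bijection with the cycles of $\tau$, a length-$m$ cycle of $\tau$ corresponding to a length-$2m$ cycle of $M_i \cup M_j$. In particular, if $M_i \cup M_j$ is a Hamilton cycle of $H$, which is a single cycle of length $2n$, then $\tau = \sigma_j^{-1}\sigma_i$ is an $n$-cycle.

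The conclusion is then a parity computation: an $n$-cycle has sign $(-1)^{n-1}$, and since $n$ is even this equals $-1$. Therefore $\sign(\sigma_i)\sign(\sigma_j) = \sign(\sigma_j^{-1}\sigma_i) = -1$, so $\sigma_i$ and $\sigma_j$ have opposite sign, i.e. $M_i$ and $M_j$ receive different colours. Thus every edge of $\GM$ joins vertices of different colour, so $\GM$ is bipartite. The only genuinely substantive point is the bijection between the components of $M_i \cup M_j$ and the cycles of $\sigma_j^{-1}\sigma_i$; once that bookkeeping is in place, the parity argument and the use of the hypothesis ``$n$ even'' are immediate. (The colouring depends on the chosen enumerations of $A$ and $B$, but changing them multiplies every $\sigma_M$ on the left and right by fixed permutations, which at worst interchanges the two colour classes and so does not affect properness; one could alternatively phrase the argument purely in terms of the relative signs $\sign(\sigma_j^{-1}\sigma_i)$, which are manifestly labelling-independent.)
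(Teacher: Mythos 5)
Your proof is correct and rests on exactly the same mechanism as the paper's: viewing each matching as a bijection between the two sides, observing that adjacency in $\GM$ forces the composite permutation $\sigma_j^{-1}\sigma_i$ to be an $n$-cycle, hence an odd permutation since $n$ is even. The only cosmetic difference is presentational -- you exhibit a proper $2$-colouring by $\sign(\sigma_M)$ directly, while the paper multiplies these signs around a hypothetical odd cycle of $\GM$ to reach a contradiction -- and your labelling-dependence remark correctly disposes of the one small subtlety your formulation introduces.
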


A version of Theorem \ref{thm:Bipartite} with the weaker conclusion that $\GM$ is not complete has been, according to Bryant, Maenhaut and Wanless \cite{Bryant} proved many times, including by Laufer in 1980 \cite{Laufer}. We re-prove it here for a few reasons, the main one being that we extend the argument slightly to show that $\GM$ is bipartite. 
The proof also introduces ideas that we will be using later (in Theorem \ref{thm:why3hard}). 

In addition, it is hard to find the theorem and its proof in the literature -- in particular, when making the conjecture that there is a semi-perfect 1-factorization of $Q_d$, Craft also asked whether a \emph{perfect} 1-factorization of $Q_d$ could be found. Theorem \ref{thm:Bipartite} is not mentioned in any of the papers that proved Craft's semi-perfect conjecture.

\begin{proof}
Let $X$ and $Y$ be the vertex classes of $H$.
A perfect matching $M$ naturally induces a function ${M: X \rightarrow Y}$, where $(x,M(x))$ is an edge of $M$.

For two perfect matchings $M_i$ and $M_j$, let $\pi_{j,i}$ be the permutation $M_j^{-1}M_i$ on $X$.
Note that $\pi_{i,i} = id$,  $\pi_{i,j} = \pi_{j,i}^{-1}$ and 
$\pi_{k,j}\pi_{j,i} = \pi_{k,i}$.
Note further that if $M_iM_j$ is an edge of $\GM$ then $M_i \cup M_j$ is a Hamilton cycle and so $\pi_{j,i}$ is a cycle of length $n$ on $X$. 

Suppose for a contradiction that $\GM$ contains a odd cycle and let $M_{i_1}$, $M_{i_2}$, $\ldots$, $M_{i_k}$, $M_{i_1}$ be such a cycle. 
The permutations $\pi_{i_2,i_1}, \pi_{i_3,i_2}, \ldots, \pi_{i_k,i_{k-1}}, \pi_{i_1,i_k}$ are all cycles of length $n$. Since $n$ is even, all of these are odd permutations. Now,
\begin{align*} 
1 = \sign(\pi_{i_1,i_1}) &= \sign(\pi_{i_1,i_k}\pi_{i_k,i_{k-1}} \pi_{i_{k-1},i_{k-2}} \ldots \pi_{i_3,i_2} \pi_{i_2,i_1}) \\
&= \sign(\pi_{i_1,i_k})\sign(\pi_{i_k,i_{k-1}})\ldots \sign(\pi_{i_3,i_2})\sign(\pi_{i_2,i_1})\\
&= (-1)^k = -1
\end{align*}
We have a contradiction, hence $\GM$ contains no odd cycles.
\end{proof}

In the light of Theorem \ref{thm:Bipartite}, the only remaining question is whether for any $k,d$ there is a 1-factorization $\mathcal{M}$ of $Q_d$ such that $\GM$ is isomorphic to the complete bipartite graph $K_{k,d-k}$. (Equivalently, whether there is a $k$-semi-perfect 1-factorization of $Q_d$ for every $k$ and $d$, in the language of Gochev and Gotchev.) Section \ref{sec:main} of this paper fully resolves this problem, except for whether $\GM$ can be isomorphic to $K_{3,3}$.

We also explain, in section \ref{sec:direction}, why the $K_{3,3}$ case cannot be resolved with our methods. In particular, the 1-factorizations we construct in the proof of the main theorem have a direction respecting property. We show that any 1-factorization $\mathcal{M}$ of $Q_{6}$ satisfying this direction respecting property cannot have $\GM$ is isomorphic to $K_{3,3}$.

We finish with some open questions. 

\section{Main Theorem}
\label{sec:main}

\begin{theorem} For $k,l \in \mathbb{N}$ not both equal to 3, there is a 1-factorization $\mathcal{M}$ of the hypercube $Q_{k+l}$ such that $\GM$ is isomorphic to the complete bipartite graph $K_{k,l}$. \label{thm:main}
\end{theorem}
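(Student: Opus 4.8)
The plan is to first reduce Theorem~\ref{thm:main} to a purely existential statement and then prove that statement by an inductive doubling construction. For the reduction: by Theorem~\ref{thm:Bipartite} (applicable since $Q_{k+l}$ has bipartition classes of the even size $2^{k+l-1}$ once $k+l\ge 2$) the graph $\GM$ is bipartite, and a bipartite graph on $k+l$ vertices that contains $K_{k,l}$ as a spanning subgraph must in fact equal $K_{k,l}$ — any extra edge would lie inside a part and close a triangle with a vertex of the other part. So it suffices to build, for each $(k,l)\ne(3,3)$, a 1-factorization $\mathcal M$ of $Q_{k+l}$ with $\GM\supseteq K_{k,l}$ spanning, i.e. a $k$-semi-perfect 1-factorization; the cases $\min(k,l)=0$ are handled by the directional matchings (for which $\GM$ is edgeless), so I may assume $k,l\ge 1$.

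The engine is a doubling lemma of roughly this shape: if $Q_d$ has a 1-factorization $\mathcal M$ with $\GM\supseteq K_{a,b}$ spanning, for a specified partition of the matchings into parts $A$ (size $a$) and $B$ (size $b$), and with a structural ``direction-respecting'' property, then — provided the target is not $K_{3,3}$ — $Q_{d+1}$ has a 1-factorization $\mathcal N$ with the same structural property and $G[\mathcal N]\supseteq K_{a,\,b+1}$ spanning, with parts $A$ and $B\cup\{N_{d+1}\}$. Granting this, Theorem~\ref{thm:main} follows by iteration: from the trivial factorization of $Q_2$ (where $\GM\cong K_{1,1}$) and repeated enlargement of one part I reach every $K_{1,l}$; relabelling a size-$1$ part as $B$ and enlarging it gives every $K_{2,l}$; enlarging a size-$2$ part (relabelled as $B$) inside a $K_{2,l}$ with $l\ge 4$ gives $K_{3,l}$; and an induction on $\min(k,l)$ yields the rest. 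Along all these routes the lemma is never called upon to produce $K_{3,3}$, so the exception is respected, while the one output it genuinely cannot reach — $K_{3,3}$ in $Q_6$ — is exactly the one excluded for direction-respecting factorizations in Section~\ref{sec:direction}.

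For the construction behind the lemma, write $Q_{d+1}=Q^0\cup Q^1\cup D$, two copies of $Q_d$ joined by the matching $D$ in the new direction. The basic mechanism is the observation that, for matchings $M_i,M_j$ of $Q_d$, the $2$-regular graph $M_i^0\cup M_j^1\cup D$ is a single Hamilton cycle of $Q_{d+1}$ exactly when $M_i\cup M_j$ is a Hamilton cycle of $Q_d$: tracing it, each lap out through $D$ and back advances a fixed vertex of $Q^0$ by the permutation $M_jM_i$ along one bipartition class of $Q_d$, and the whole of $Q_{d+1}$ is covered precisely when that permutation is a single cycle; moreover this Hamilton cycle alternates $D$-edges with within-copy edges, so it splits into the two matchings $D$ and $M_i^0\cup M_j^1$. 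A construction using only such pairs fails, since any two purely within-copy matchings have disconnected union, so the $D$-edges must be spread across \emph{all} the matchings in the growing part. The construction therefore partitions $V(Q_d)$, and with it the edge set of $D$, in a way compatible with $\mathcal M$, and uses this to reroute small portions of the doubled matchings $M_j^0\cup M_j^1$ through $D$, producing matchings $N_{a+1},\dots,N_{d+1}$ each carrying some $D$-edges while keeping each $N_i\cup N_j$ with $i\le a<j$ a single cycle.

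The main obstacle is precisely this last point: verifying that after the rerouting each relevant union $N_i\cup N_j$ is one Hamilton cycle rather than several disjoint cycles. This is a tracing and permutation-parity computation in the spirit of the proof of Theorem~\ref{thm:Bipartite} — tracking how the $D$-edges splice the cyclic pieces together and checking that no counting or parity obstruction forces a split — and it is here that the $Q_6$ case, where $\mathcal N$ would have to realise $K_{3,3}$, resists; Section~\ref{sec:direction} confirms that this resistance is genuine.
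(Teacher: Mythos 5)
There is a genuine gap: the entire content of the theorem is delegated to a ``doubling lemma'' that you only state ``roughly'' and never prove. The construction behind it is described as partitioning $V(Q_d)$ ``in a way compatible with $\mathcal M$'' and ``rerouting small portions'' of the doubled matchings through the cross-matching $D$, but no actual rerouting rule is given, and you yourself identify the verification that each cross union $N_i\cup N_j$ is a single Hamilton cycle as ``the main obstacle'' --- and then do not carry it out. That verification is not a routine parity check in the style of Theorem \ref{thm:Bipartite}; it is exactly the hard part of the problem, and the paper does not do anything like your induction. Instead it builds the factorization of $Q_{k+l}$ in one shot as a product construction on $Q_k\times Q_l$, importing the needed Hamiltonian structure from Stong's theorem via Corollary \ref{cor:2Stong} (with a separate ad hoc construction when one side is $Q_3$, Proposition \ref{case:k=3}, and an explicit example for $K_{3,1}$, Proposition \ref{case:k=3,l=1}). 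Your proposal has no substitute for this external input, so nothing in it actually produces the matchings whose existence is claimed.

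Worse, the lemma as you shaped it appears to be false, not merely unproved. Your iteration reaches $K_{1,3}$ in $Q_4$ from $K_{1,2}$ in $Q_3$, and you require the lemma to preserve a ``direction-respecting'' property (the new part's matchings use only the old $B$-directions together with the new direction, which is precisely the paper's notion). But Theorem \ref{thm:why3hard} shows there is \emph{no} direction-respecting 1-factorization of $Q_4$ with $\GM\cong K_{3,1}$; that is why the paper handles $K_{3,1}$ by a special non-direction-respecting example. So excluding only the target $K_{3,3}$ is not enough: a direction-respecting doubling lemma must also fail at $K_{3,1}$, which blocks your chain $K_{1,1}\to K_{1,2}\to K_{1,3}\to\cdots$ and hence every $K_{1,l}$ with $l\ge 3$ along the route you describe. (If your structural property is meant to be something weaker than direction-respecting, then the lemma is even less specified and the claimed invariant of the induction is unclear.) The preliminary reduction --- bipartiteness of $\GM$ via Theorem \ref{thm:Bipartite} plus the observation that a spanning $K_{k,l}$ inside a bipartite graph on $k+l$ vertices forces equality --- is fine, but it does not touch the existence question, which remains unestablished.
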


To prove the theorem, we will use the following result due to Stong, which concerns the symmetric directed hypercube $\overleftrightarrow{Q_d}$, obtained from $Q_d$ by replacing each edge with two directed edges, one in each direction. 
\begin{theorem}[\cite{Stong}] \label{thm:Stong}
For $d \ne 3$, the symmetric directed hypercube $\overleftrightarrow{Q_d}$ can be partitioned into $d$ directed Hamilton cycles.
\end{theorem}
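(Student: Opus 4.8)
The plan is to prove Theorem~\ref{thm:Stong} by induction on $d$, exploiting the product structure $\overleftrightarrow{Q_d} = \overleftrightarrow{Q_{d-1}} \square \overleftrightarrow{K_2}$: the vertex set splits into a ``bottom layer'' $\{(v,0)\}$ and a ``top layer'' $\{(v,1)\}$, each carrying a copy of $\overleftrightarrow{Q_{d-1}}$, and the two layers are joined by the $2^{d-1}$ bidirectional edges in direction $d$ (a doubled perfect matching). Given a decomposition of $\overleftrightarrow{Q_{d-1}}$ into $d-1$ directed Hamilton cycles $H_1,\dots,H_{d-1}$, the task is to recombine the two layer-copies of the $H_c$ together with the direction-$d$ matching edges into $d$ directed Hamilton cycles of $\overleftrightarrow{Q_d}$. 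First I would record the edge count: the layers contribute $2(d-1)$ directed Hamilton cycles, i.e.\ $2(d-1)\cdot 2^{d-1}$ directed edges, and the matching contributes $2^{d}$ directed edges, for a total of $d\cdot 2^{d}$, exactly $d$ directed Hamilton cycles' worth. Since the $2^{d}$ directed matching edges already make up one cycle's worth and cannot by themselves form a cycle (they only join $(v,0)$ to $(v,1)$), each of the $d$ output cycles must weave between the layers, using on average $2^{d}/d$ matching edges; so the combination is necessarily global rather than a simple pairing of layer cycles.

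It is cleanest to phrase the target as an edge-colouring: assigning to each vertex $v$ a bijection $\pi_v\colon\{1,\dots,d\}\to\{1,\dots,d\}$, where colour $c$ leaves $v$ along direction $\pi_v(c)$, so that every colour class is a single spanning cycle and at each vertex the $d$ colours use the $d$ distinct directions. In these terms I would build the lifting by starting from the colouring of $\overleftrightarrow{Q_{d-1}}$ in both layers, assigning the new direction $d$ to a carefully chosen colour at each vertex, and rerouting the affected layer cycles through the matching so that the whole of each output colour class becomes connected. Concretely, I would take the bottom-layer and top-layer copies of each $H_c$ and splice them to one another across pairs of matching edges, choosing the splice points so that the resulting ``cycle-merge'' graph on the set of layer cycles is connected; this is the step that turns $2(d-1)$ layer cycles plus the matching into $d$ genuine Hamilton cycles. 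I expect the main obstacle to be exactly this connectivity bookkeeping: ensuring that after all the splices each colour class is a single $2^{d}$-cycle and not a disjoint union of shorter cycles. To control it I would strengthen the induction hypothesis, carrying along in each $H_c$ a prescribed edge (or pair of edges) at which splicing is guaranteed to merge, rather than split, cycles, so that a spanning-tree choice of splices yields connectivity.

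The base cases require care because of the exclusion of $d=3$. I would verify $d=1$ and $d=2$ directly: $\overleftrightarrow{Q_1}$ is a single directed digon, and $\overleftrightarrow{Q_2}$ decomposes into the two oriented $4$-cycles. The crucial extra base is $d=4$: since the decomposition of $\overleftrightarrow{Q_3}$ does not exist, the inductive step cannot manufacture $\overleftrightarrow{Q_4}$ from it, so I would exhibit an explicit decomposition of $\overleftrightarrow{Q_4}$ into $4$ directed Hamilton cycles by hand (or by a short computer search). With $d\in\{1,2,4\}$ in hand, the inductive step then carries every $d\ge 5$ from the decomposition of $\overleftrightarrow{Q_{d-1}}$, whose dimension $d-1\ge 4$.

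Finally, I would want the argument to explain, rather than merely avoid, the failure at $d=3$. The step from $\overleftrightarrow{Q_2}$ to $\overleftrightarrow{Q_3}$ is the one place the construction must break down: with only $d-1=2$ layer cycles there is too little freedom to distribute the four matching edges among three output cycles while keeping each a single spanning cycle. Thus the hypothesis of the lifting lemma should be that the input dimension is not $2$, and I would isolate precisely this deficiency of degrees of freedom as the reason the lemma --- and hence the theorem --- genuinely excludes $d=3$, consistent with the impossibility of decomposing $\overleftrightarrow{Q_3}$ into $3$ directed Hamilton cycles.
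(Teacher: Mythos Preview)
The paper does not prove this theorem at all: it is quoted as a result of Stong and used as a black box, with no argument supplied. So there is nothing in the paper to compare your proposal against; any assessment has to be of your sketch on its own terms.

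As a plan, your outline is in the right spirit and in fact mirrors the strategy of Stong's original paper, which proves a general lifting lemma for Hamilton decompositions of Cartesian products of digraphs and then applies it inductively to $\overleftrightarrow{Q_d}=\overleftrightarrow{Q_{d-1}}\,\square\,\overleftrightarrow{K_2}$. However, what you have written is a description of the shape of such a proof, not a proof. The entire content lies in the lifting lemma you allude to, and you have not stated it precisely, let alone proved it: you say you would ``splice'' layer cycles through matching edges and ``choose the splice points so that the resulting cycle-merge graph is connected'', and that you ``expect the main obstacle'' to be the connectivity bookkeeping, to be handled by an unspecified strengthened induction hypothesis. That is exactly the hard part; until you write down the strengthened hypothesis and verify that it survives the inductive step, you have only restated the problem. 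Your base-case analysis is sensible (in particular the need to start a separate induction at $d=4$ because $d=3$ fails), but the remark that $d=3$ fails because there is ``too little freedom'' is not an argument either; the impossibility for $\overleftrightarrow{Q_3}$ is a separate small claim that needs its own short proof.

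In short: the paper treats this as a citation, and your proposal is a reasonable roadmap toward Stong's own argument, but as written it contains no proof of the one nontrivial step.
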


Stong's result applies to directed cubes, but the following corollary allows us to use it for undirected cubes.

\begin{corollary}\label{cor:2Stong}
For $d \ne 3$, the cube $Q_d$ can be partitioned into 1-factors $A_1, A_2, \ldots, A_d$ and also partitioned into 1-factors $B_1, B_2, \ldots, B_d$ such that $A_i \cup B_i$ is a Hamilton cycle for all $i = 1,2,\ldots,d$.
\end{corollary}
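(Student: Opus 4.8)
The plan is to take the partition of $\overleftrightarrow{Q_d}$ into $d$ directed Hamilton cycles $H_1, \dots, H_d$ guaranteed by Theorem \ref{thm:Stong}, and to extract from each directed cycle $H_i$ two perfect matchings of the underlying undirected graph $Q_d$ whose union is the (undirected) Hamilton cycle underlying $H_i$. Concretely, the undirected cycle underlying $H_i$ has even length $2^d$ (since $Q_d$ is bipartite), so its edges two-colour in an alternating fashion into two perfect matchings; call them $A_i^{(0)}$ and $A_i^{(1)}$, with $A_i^{(0)} \cup A_i^{(1)}$ being exactly that Hamilton cycle. Setting $A_i := A_i^{(0)}$ and $B_i := A_i^{(1)}$ for each $i$ gives the desired pair for that index; the point to check is that $\{A_1,\dots,A_d\}$ is a partition of $E(Q_d)$ and likewise for $\{B_1,\dots,B_d\}$.

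First I would note that the $d$ underlying undirected Hamilton cycles of $H_1, \dots, H_d$ partition $E(Q_d)$: each edge of $Q_d$ corresponds to two oppositely-directed edges in $\overleftrightarrow{Q_d}$, and Theorem \ref{thm:Stong} distributes all $d \cdot 2^d$ directed edges among the $H_i$, each of which uses $2^d$ directed edges. Since a directed Hamilton cycle visits each vertex once, it cannot contain both directed copies of a single undirected edge (that would force a vertex of total degree contribution forcing a 2-cycle, impossible on $d \ge 2$ vertices more than two); hence each $H_i$'s underlying edge set is a genuine undirected Hamilton cycle on $2^d$ edges, and as the $H_i$ are edge-disjoint in the directed sense and there are $d$ of them accounting for all $d \cdot 2^d$ directed edges, the underlying undirected cycles are edge-disjoint and cover all $d \cdot 2^{d-1}$ edges of $Q_d$. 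Next, for each $i$ fix a proper $2$-edge-colouring of the cycle underlying $H_i$ and let $A_i, B_i$ be the two colour classes; each is a perfect matching of $Q_d$ because an alternating cycle through every vertex hits each vertex exactly once in each colour. Finally, since the underlying cycles partition $E(Q_d)$ and $A_i \cup B_i$ is the $i$-th such cycle, the sets $A_1, \dots, A_d, B_1, \dots, B_d$ together partition $E(Q_d)$ into $2d$ matchings; in particular $\{A_i\}$ uses one edge at each vertex from each cycle, so it is a $1$-factorization, and likewise $\{B_i\}$.

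I do not expect a serious obstacle here — the corollary is essentially a bookkeeping translation from the directed to the undirected setting, and the only mild subtlety is verifying that splitting an even undirected cycle into its two alternating matchings is compatible across all $d$ cycles simultaneously, which is automatic since the cycles are edge-disjoint and each split is made independently. The one genuine input is Theorem \ref{thm:Stong}, which is why the hypothesis $d \ne 3$ is inherited. If anything needs care it is the remark that no $H_i$ can use both orientations of an edge, but on $Q_d$ with $d \ge 2$ this is immediate since such a pair would constitute a closed directed walk of length $2$, not a spanning cycle on $2^d \ge 4$ vertices.
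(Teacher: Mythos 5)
There is a genuine gap, and it starts with a counting error. The underlying undirected Hamilton cycles of $H_1,\dots,H_d$ do \emph{not} partition $E(Q_d)$: each cycle underlies $2^d$ undirected edges, so $d$ edge-disjoint such cycles would account for $d\cdot 2^d$ edges, while $Q_d$ has only $d\cdot 2^{d-1}$. What is actually true is that each undirected edge lies in exactly \emph{two} of the underlying cycles (its two orientations cannot lie in the same $H_i$, as you correctly note, so they lie in two distinct ones); the cycles double-cover $E(Q_d)$ rather than partitioning it. This breaks the rest of your argument in two ways. First, your final conclusion -- that $A_1,\dots,A_d,B_1,\dots,B_d$ together partition $E(Q_d)$ into $2d$ perfect matchings -- is impossible outright, since $Q_d$ is $d$-regular and so admits at most $d$ pairwise disjoint perfect matchings; the corollary asserts two \emph{separate} 1-factorizations of the whole cube (each edge lies in exactly one $A_i$ and exactly one $B_j$), not a joint partition into $2d$ matchings. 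Second, because the cycles share edges, choosing the alternating $2$-colouring of each cycle ``independently'' is exactly where the argument fails: an edge lying in the cycles underlying $H_i$ and $H_j$ may receive colour $A$ in both, so the $A_i$ need not be pairwise disjoint and need not cover $E(Q_d)$.

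The repair is to make the split canonically, using the bipartition of $Q_d$ into even and odd vertices together with the orientations, which is what the paper does: let $A_i$ be the edges of $H_i$ directed from an even vertex to an odd vertex, and $B_i$ the edges of $H_i$ directed from odd to even. These are the two alternating classes of the cycle underlying $H_i$, so $A_i\cup B_i$ is that Hamilton cycle; and since every undirected edge has its even-to-odd orientation in exactly one $H_i$ and its odd-to-even orientation in exactly one $H_j$, it lies in exactly one $A_i$ and exactly one $B_j$. Hence $\{A_i\}$ and $\{B_j\}$ are each 1-factorizations of $Q_d$, as claimed. In short, your framework is close, but the choice of the two matchings inside each cycle must be coordinated via the bipartition; an arbitrary per-cycle choice does not suffice.
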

\begin{proof}
Using Theorem \ref{thm:Stong}, partition $\overleftrightarrow{Q_d}$  into directed Hamilton cycles $H_1,H_2,\ldots,H_d$. 
Let $E$ be the even vertices of $\overleftrightarrow{Q_d}$ and $O$ the odd vertices, so that $\overleftrightarrow{Q_d}$ is bipartite with respect to the vertex classes $E$ and $O$. For each $H_i$, we define $A_i$ to be the edges of $H_i$ that go from $E$ to $O$, and $B_i$ to be the edges that go from $O$ to $E$. 

Since $H_1,H_2,\ldots,H_d$ partition $\overleftrightarrow{Q_d}$ , every edge from $E$ to $O$ is in a unique $A_i$ and every edge from $O$ to $E$ is in a unique $B_j$. 
If we now ignore the directions on the edges, every edge of $Q_d$ is in a unique $A_i$ and a unique $B_j$. It is clear that $A_i$ and $B_i$ are perfect matchings and $A_i \cup B_i$ is a Hamilton cycle by construction.
\end{proof}

Note that we have slightly abused notation in the case $d=1$, since $A_1 = B_1 = Q_1$ and so $A_1 \cup B_1$ is a single edge rather than a cycle. This will not matter in the cases $k \ne 3, l=1$, and we will consider the case $k=3,l=1$ separately. 

Corollary \ref{cor:2Stong} together with a theorem of Gochev and Gotchev \cite[Theorem~3.1]{Gochev} is enough to show that it is possible to have $\GM$ isomorphic to $K_{k,n-k}$ for all $k\ne 3$ and all even $n-k$. We will improve on their arguments to deal with all but one of the remaining cases. 

We will split the theorem for three different cases and prove each separately. Before we do so, let us outline the ideas involved. 

We can view the hypercube $Q_{k+l}$ as a $k$-dimensional hypercube whose `vertices' are copies of $Q_l$ (i.e. as the Cartesian product of $Q_k$ and $Q_l$). 
Let us formalise this idea: 
Label the vertices of $Q_k$ as subsets of $\{1,2,\ldots,k\}$ in the usual way. 
For each vertex $u$ of $Q_k$,
we define a different copy of $Q_l$ within $Q_{l+k}$: 
let $Q_l^u$ be the induced subgraph of $Q_{k+l}$ on all vertices $w$ where $w \cap \{1,2,\ldots,k\} = u$. 

Conversely, we can view $Q_{k+l}$ as a $l$-dimensional hypercube whose `vertices' are copies of $Q_k$. 
This time, label the vertices of $Q_l$ as subsets of $\{k+1,k+2,\ldots,k+l\}$ in the natural way. 
For each vertex $v$ of $Q_l$, 
we define a different copy of $Q_k$ within $Q_{l+k}$: 
let $Q_k^v$ be the induced subgraph of $Q_{k+l}$ on all vertices $x$ with $x \cap \{k+1,k+2,\ldots k+l\} = v$.

The most straightforward case of the theorem is when neither $k$ nor $l$ is equal to $3$, proved in Proposition \ref{case:neither3}.
To prove this we use a generalisation of Gochev and Gotchev's construction \cite{Gochev}. 

The idea of the proof is as follows:
first, we construct $k$ disjoint matchings that use only edges in directions $1,\ldots k$. The matchings used within the $Q_k^v$s are those obtained from applying Corollary \ref{cor:2Stong} to $Q_k$.
Next we construct $l$ disjoint matchings that use only edges in directions $k+1,\ldots, k+l$. Similarly, the matchings used within the $Q_l^u$s are those obtained from applying Corollary \ref{cor:2Stong} to $Q_l$.
We then prove that taking the union of a matching of the first kind and a matching of the second kind gives a Hamilton cycle. 

The second case of the theorem is when $k=3$ and $l$ is not equal to $1$ or $3$, proved in Proposition \ref{case:k=3}. We use a similar construction to the first case, the only difference being that while we can use Corollary \ref{cor:2Stong} on $Q_l$, we cannot apply it to $Q_3$. We will instead take directional matchings on the copies of $Q_3$; it turns out this can be made to work here.

Finally, we are left with two cases: $(k,l) = (3,1)$ and $(k,l) =(3,3)$. The first of these is proved in Proposition \ref{case:k=3,l=1} by means of an explicit example. The case $(k,l) =(3,3)$ is left unsolved. The difficulty of these final two cases is discussed in the section \ref{sec:direction}.

The following useful notation is common to the proofs of propositions \ref{case:neither3} and \ref{case:k=3}.
For a perfect matching $M$ and a vertex $v$, we define $M(v)$ to be the other endpoint of the edge containing $v$ in $M$. (Note that this clashes slightly with our notation in Theorem \ref{thm:Bipartite}: by that notation we are here conflating $M$ and $M^{-1}$.) 

\begin{prop} When neither $k$ nor $l$ is equal to 3, there is a 1-factorization $\mathcal{M}$ of the hypercube $Q_{k+l}$ such that $\GM$ is isomorphic to the complete bipartite graph $K_{k,l}$.
\label{case:neither3}
\end{prop}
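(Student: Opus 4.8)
The plan is to build the $1$-factorization $\mathcal{M}$ by combining matchings coming from the two factorizations of Corollary~\ref{cor:2Stong}, one applied to $Q_k$ and one to $Q_l$. Concretely, apply Corollary~\ref{cor:2Stong} to $Q_k$ to get $1$-factors $A_1,\ldots,A_k$ and $B_1,\ldots,B_k$ of $Q_k$ with $A_i\cup B_i$ a Hamilton cycle on $Q_k$; similarly apply it to $Q_l$ to get $1$-factors $C_1,\ldots,C_l$ and $E_1,\ldots,E_l$ of $Q_l$ with $C_j\cup E_j$ a Hamilton cycle on $Q_l$. For each $i\in\{1,\ldots,k\}$, define a matching $M_i$ of $Q_{k+l}$ that uses only edges in directions $1,\ldots,k$: on each copy $Q_k^v$ (for $v$ a vertex of $Q_l$) it acts as either $A_i$ or $B_i$, the choice depending on the parity of $v$ — say $A_i$ when $v$ is even and $B_i$ when $v$ is odd. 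Since every edge in directions $1,\ldots,k$ lies in exactly one such copy, and $A_i,B_i$ together with their counterparts over all $i$ partition the edges of $Q_k$, the $M_i$ are disjoint perfect matchings partitioning all direction-$1,\ldots,k$ edges of $Q_{k+l}$. Dually, for each $j$ define $N_j$ using only directions $k+1,\ldots,k+l$, acting on each copy $Q_l^u$ as $C_j$ or $E_j$ according to the parity of $u$. Then $\mathcal{M}=\{M_1,\ldots,M_k,N_1,\ldots,N_l\}$ is a $1$-factorization of $Q_{k+l}$ with $k+l$ factors, as required; note we need $k,l\neq 3$ precisely so that Corollary~\ref{cor:2Stong} is available on both $Q_k$ and $Q_l$.

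The heart of the argument is then to show that $\GM$ is exactly $K_{k,l}$, i.e.\ (a)~$M_i\cup N_j$ is a Hamilton cycle for every $i,j$, and (b)~$M_i\cup M_{i'}$ and $N_j\cup N_{j'}$ are \emph{not} Hamilton cycles for $i\neq i'$, $j\neq j'$. Part~(b) is quick: $M_i\cup M_{i'}$ uses only directions $1,\ldots,k$, so it is contained in the disjoint union of the copies $Q_k^v$, and since $k+l>k$ there is more than one such copy, so the union is disconnected and cannot be a single cycle (it decomposes as $2^l$ copies of the graph $A_i\cup A_{i'}$ or $B_i\cup B_{i'}$). Similarly for the $N_j$'s. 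Part~(a) is the main obstacle. Consider the union $M_i\cup N_j$: restricted to directions $1,\ldots,k$ it is $M_i$, and restricted to directions $k+1,\ldots,k+l$ it is $N_j$. I would trace a single vertex around the $2$-regular graph $M_i\cup N_j$ and show the orbit has length $2^{k+l}$. Starting at a vertex $w$ with $w\cap\{1,\ldots,k\}=u$ and $w\cap\{k+1,\ldots,k+l\}=v$, alternately applying $M_i$ (which moves within $Q_k^v$, following an $A_i$- or $B_i$-edge depending on parity of $v$) and $N_j$ (which moves within $Q_l^u$, following a $C_j$- or $E_j$-edge depending on parity of $u$).

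To organize this, I would project onto the two factor cubes. Reading off only the directions-$1,\ldots,k$ coordinate as we walk gives, because the $N_j$ steps never change that coordinate and the $M_i$ steps alternate $A_i$ then (after an intervening $N_j$ step flips parity of $v$) $B_i$, a walk in $Q_k$ that alternates $A_i$ and $B_i$ edges — i.e.\ it traverses the Hamilton cycle $A_i\cup B_i$. Likewise the directions-$(k+1,\ldots,k+l)$ projection traverses the Hamilton cycle $C_j\cup E_j$ on $Q_l$. The key parity bookkeeping is that one $M_i$ step flips the parity of $u$ (it changes a coordinate in $\{1,\ldots,k\}$) and one $N_j$ step flips the parity of $v$; this is exactly what makes the alternation $A_i,B_i,A_i,B_i,\dots$ (resp.\ $C_j,E_j,\dots$) happen correctly, so that each projection is forced to go all the way around its Hamilton cycle before returning. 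A short Chinese-Remainder / coprimality-free argument — really just: the walk returns to $w$ only when \emph{both} projections have completed a full number of loops of their respective Hamilton cycles, and the synchronization of the alternations forces this to take $2\cdot 2^{k-1}\cdot 2^{l-1}\cdot 2=2^{k+l}$ steps — then shows the orbit is the whole vertex set, so $M_i\cup N_j$ is a Hamilton cycle. (One must be a little careful about the lengths: $A_i\cup B_i$ has $2^k$ vertices hence $2^k$ edges, half of them $A_i$-edges; similarly for the other.) I expect the bulk of the write-up to be exactly this traversal argument, set up cleanly enough that the parity alternation is transparent; the construction itself and part~(b) are essentially immediate.
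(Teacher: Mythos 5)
Your construction is not the paper's and, as written, it does not work. You make \emph{both} families parity-alternating: $M_i$ acts as $A_i$ or $B_i$ on $Q_k^v$ according to the parity of $v$, and $N_j$ acts as $C_j$ or $E_j$ on $Q_l^u$ according to the parity of $u$. Trace the walk in $M_i\cup N_j$ as you suggest: after $m$ steps of $M_i$ and $m$ steps of $N_j$, the projection to $Q_k$ has advanced exactly $m$ steps around the Hamilton cycle $A_i\cup B_i$ and the projection to $Q_l$ has advanced exactly $m$ steps around $C_j\cup E_j$ (your parity bookkeeping is right up to this point). The walk therefore returns to its starting vertex as soon as $m$ is a common multiple of the two cycle lengths $2^k$ and $2^l$, i.e.\ after $2\,\mathrm{lcm}(2^k,2^l)=2^{\max(k,l)+1}$ steps. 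Since the two lengths are both powers of two, there is no coprimality to exploit, and $2^{\max(k,l)+1}<2^{k+l}$ whenever $k,l\ge 2$; your final claim that ``synchronization forces $2\cdot 2^{k-1}\cdot 2^{l-1}\cdot 2=2^{k+l}$ steps'' is exactly the step that fails. Concretely, for $k=l=2$ your $M_i\cup N_j$ splits $Q_4$ into two $8$-cycles (start at $(\emptyset,\emptyset)$ and follow the alternating walk: it closes after $8$ vertices), so $\GM$ has no edges at all between the two sides, rather than being $K_{2,2}$. No choice of the Hamilton decompositions repairs this, because the lockstep advance of the two projections is forced by the symmetric parity rule.

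The paper avoids this by breaking the symmetry: $N_j$ is parity-alternating as in your proposal, but $M_i$ is defined to be $A_i$ on the \emph{single} copy $Q_k^{\emptyset}$ and $B_i$ on every other copy $Q_k^v$, $v\ne\emptyset$. Then, while the $Q_l$-projection goes once around $X_j\cup Y_j$, the $Q_k$-coordinate merely bounces between $u$ and $B_i(u)$, and only upon re-entering $Q_k^{\emptyset}$ does it take the $A_i$-step; so each full loop of the $Q_l$ Hamilton cycle advances the $Q_k$-projection by just two steps of $A_i\cup B_i$. The walk therefore closes only after $2^{k-1}$ such loops, i.e.\ after $2^k\cdot 2^l$ vertices, which is what makes $M_i\cup N_j$ Hamiltonian. (Your part (b) — that $M_i\cup M_{i'}$ and $N_j\cup N_{j'}$ are disconnected, hence non-Hamiltonian — is fine and is the easy half.) To salvage your write-up you would need to replace the parity rule for the $M_i$ by the paper's ``one distinguished copy'' rule (or some other device that desynchronizes the two projections) and redo the traversal count accordingly.
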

\begin{proof}
Using Corollary \ref{cor:2Stong} partition $Q_k$ into matchings $A_1,A_2,\ldots,A_k$ and matchings $B_1,B_2,\ldots,B_k$ such that $A_i \cup B_i$ is a Hamilton cycle for all $i$. 

For $i = 1,2,\ldots,k$ define $M_i$ to be the matching on $Q_{k+l}$ defined by taking the following edges:
$$\begin{cases}
A_i & \text{ on } Q_k^{\emptyset} \\
B_i & \text{ on } Q_k^v \text{ for } v\ne \emptyset
\end{cases}
$$
Note that the $M_i$ are all disjoint, and they only use edges in directions $1,2,\ldots,k$.

Also partition $Q_l$ into matchings $X_1,X_2,\ldots,X_l$ and matchings $Y_1,Y_2,\ldots,Y_l$ such that $X_j \cup Y_j$ is a Hamilton cycle for all $j$.

For $j=1,2,\ldots,l$ define $N_j$ to be the matching on $Q_{k+l}$ defined by taking the following edges:
$$\begin{cases}
X_j & \text{ on } Q_l^u \text{ for } u \text{ even} \\
Y_j & \text{ on } Q_l^u \text{ for } u \text{ odd}
\end{cases}
$$
Another way to think of $N_j$ is as containing edges between copies of $Q_k^v$. From an even vertex in $Q_k^v$ we add an edge to the corresponding vertex in $Q_k^{X_j(v)}$, and from an odd vertex in $Q_k^v$ we add an edge to the corresponding vertex in $Q_k^{Y_j(v)}$.

Note that the $N_j$ are all disjoint, and they only use edges in directions $k+1,k+2,\ldots,k+l$. Thus the matchings $\{M_i\}_{i=1}^k \cup \{N_j\}_{j=1}^l$ are all disjoint and form a 1-factorization of $Q_{k+l}$. 

\begin{figure}[h]
    \centering
    \begin{subfigure}[b]{0.45\textwidth}
       \includegraphics[width=\textwidth]{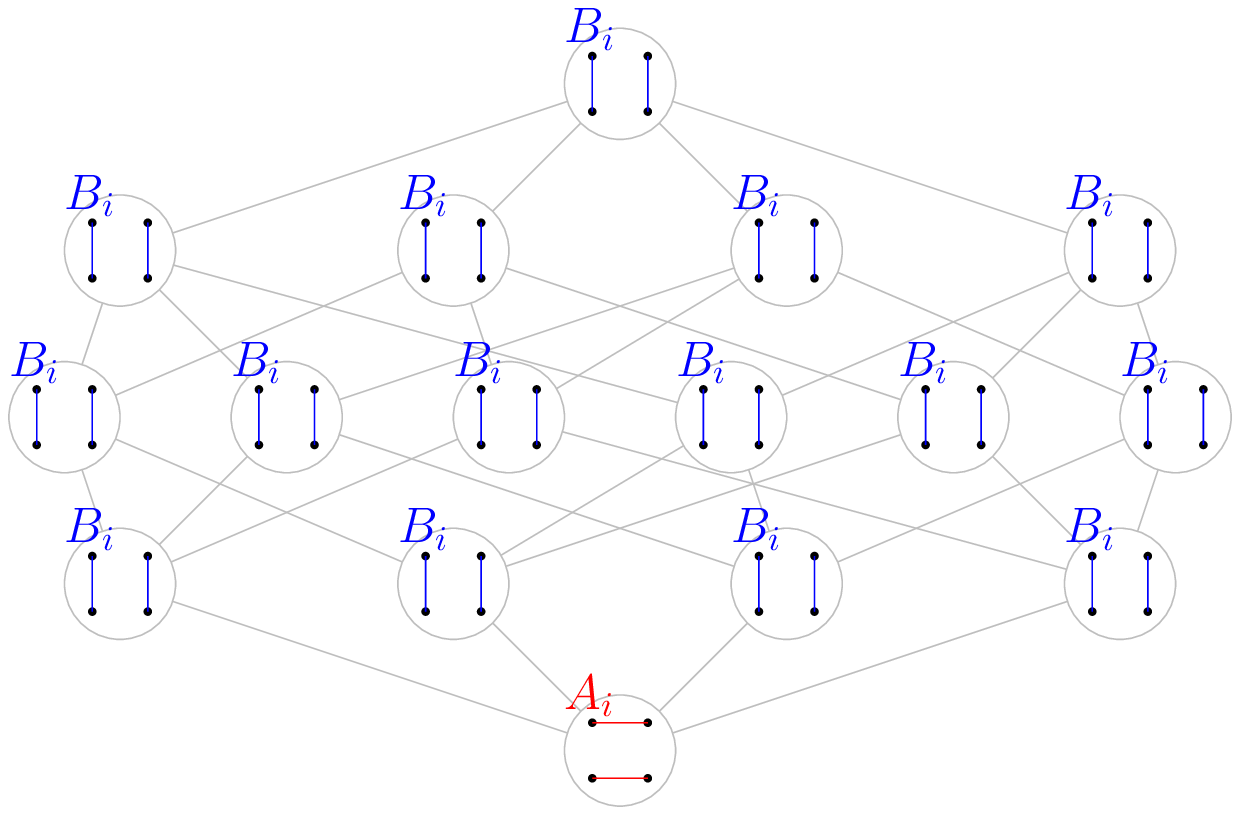}
        \caption{$M_i$}
    \end{subfigure}
    \qquad
    \begin{subfigure}[b]{0.45\textwidth}
        \includegraphics[width=\textwidth]{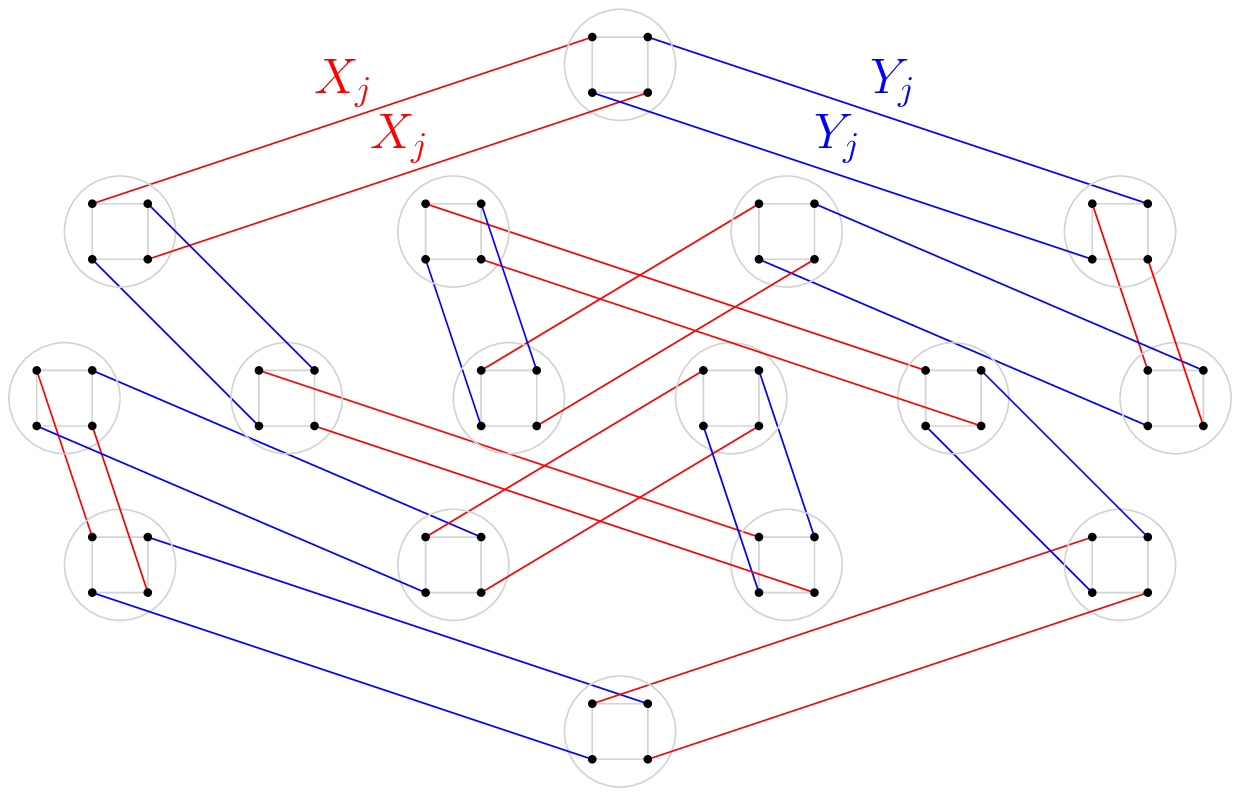}
        \caption{$N_j$}
    \end{subfigure}
    \caption{An example when $k=2$ and $l=4$}\label{fig:general_k,l}
\end{figure}

All that is left is to show that $M_i \cup N_j$ is a Hamilton cycle for all $i,j$. 
Consider following the cycle starting at a vertex $u$ that lies in $Q_k^{\emptyset}$ and alternating between edges first in $N_j$ and then in $M_i$. 

Every time we travel along an edge in $M_i$ the parity of the vertex in $Q_k^v$ switches, and so we will alternate using edges from $X_j$ and edges from $Y_j$ in $N_j$. 
As $X_j \cup Y_j$ is a Hamilton cycle, the first time the cycle returns to $Q_k^{\emptyset}$ we will have travelled through each other $Q_k^v$ exactly once. 

Each time we travel through a different $Q_k^v$ we use an edge from $B_i$ within it. 
After passing through $2^l - 1$ copies of $Q_k^v$ we will have bounced between $u$ and $B_i(u)$ an odd number of times, so the first vertex we encounter in our return to $Q_k^{\emptyset}$ is $B_i(u)$. 
The next vertex would then be $A_i(B_i(u))$.

After passing through $2(2^l)$ distinct vertices (two in each $Q_k^v$) we have moved from $u$ to $A_i(B_i(u))$, i.e. made two steps of the Hamilton cycle $A_i \cup B_i$ within $Q_k^{\emptyset}$.
Thus the first time we will return to $u \cup \emptyset$ is after passing through $2^k2^l$ vertices, which is the total number of vertices in the graph. Hence we have a Hamilton cycle. 
\end{proof}

\begin{prop} For $l$ not equal to $1$ or $3$, there is a 1-factorization $\mathcal{M}$ of the hypercube $Q_{3+l}$ such that $\GM$ is isomorphic to the complete bipartite graph $K_{3,l}$.
\label{case:k=3}
\end{prop}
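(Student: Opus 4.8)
The plan is to follow the blueprint laid out in the discussion preceding Proposition \ref{case:neither3}, replacing the Stong-type decomposition of $Q_3$ (which we cannot apply) with the directional matchings $D_1, D_2, D_3$ of $Q_3$. Concretely: for the ``$Q_k$-direction'' matchings, set $k=3$ and, for $i = 1,2,3$, let $M_i$ be the matching on $Q_{3+l}$ that uses $D_i$ on \emph{every} copy $Q_3^v$; equivalently $M_i = D_i$ as a directional matching of the whole cube $Q_{3+l}$. For the ``$Q_l$-direction'' matchings, apply Corollary \ref{cor:2Stong} to $Q_l$ (legitimate since $l \ne 3$, and $l \ne 1$ rules out the degenerate case flagged after that corollary) to obtain $X_1, \ldots, X_l$ and $Y_1, \ldots, Y_l$ with each $X_j \cup Y_j$ a Hamilton cycle of $Q_l$, and define $N_j$ on $Q_{3+l}$ exactly as in Proposition \ref{case:neither3}: use $X_j$ on $Q_l^u$ for $u$ even and $Y_j$ on $Q_l^u$ for $u$ odd. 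The $M_i$ use only directions $1,2,3$ and the $N_j$ only directions $4, \ldots, 3+l$, so $\mathcal{M} = \{M_i\}_{i=1}^{3} \cup \{N_j\}_{j=1}^{l}$ is a 1-factorization of $Q_{3+l}$.

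It remains to show each $M_i \cup N_j$ is a Hamilton cycle, which is the heart of the argument. Trace the cycle through $M_i \cup N_j$ as in the previous proof, starting from a vertex in some fixed copy $Q_3^v$ and alternating $N_j$-edges and $M_i$-edges. Each $M_i$-edge flips the parity of the vertex inside its $Q_3$-copy (since it changes element $i$), so the walk alternates $X_j$- and $Y_j$-steps between copies; because $X_j \cup Y_j$ is a Hamilton cycle of $Q_l$, after $2^l$ copy-to-copy moves the walk first returns to $Q_3^v$, having visited every copy $Q_3^w$ exactly once and used one $D_i$-edge inside each. The key new point is to compute where in $Q_3^v$ we land: after $2^l - 1$ applications of $D_i$ (one per other copy) we have toggled coordinate $i$ an odd number of times relative to our entry, so we arrive at $D_i(u)$, and the next $M_i$-step returns us to $u$ only after a \emph{second} full tour — i.e. after $2 \cdot 2^l$ copies, hence $2^3 \cdot 2^l = 2^{3+l}$ vertices total, so the walk is a single Hamilton cycle. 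Here we use crucially that $D_i \cup D_i$ acts on $Q_3^v$ as an involution with no fixed point (it is a perfect matching), so that exactly two passes through each copy are needed; this is the analogue of ``$A_i \cup B_i$ is a Hamilton cycle'' in the previous proof, degenerated to the fact that a matching composed with itself is the identity.

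I should double-check two subtleties. First, that the parity bookkeeping for the $N_j$ is consistent: the parity of a vertex of $Q_{3+l}$ splits as (parity within $Q_3^v$) plus (parity of $v$ as a subset of $\{4,\ldots,3+l\}$), and since $N_j$ is defined by the parity of the $Q_l$-coordinate while $M_i$ only alters the $Q_3$-coordinate, the alternation $X_j \leftrightarrow Y_j$ along the walk is governed exactly by the $Q_3$-parity, as claimed. Second, for small $l$ (notably $l=2$) one should confirm the argument still goes through verbatim; since Corollary \ref{cor:2Stong} is valid for $l=2$ (giving $X_1 \cup Y_1$ and $X_2 \cup Y_2$ each a $4$-cycle, which is a Hamilton cycle of $Q_2$), there is no obstruction. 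The main obstacle, then, is not any single hard estimate but getting the combinatorial orbit-length count exactly right — in particular the ``factor of two'' coming from $D_i$ being its own inverse — and I expect the write-up to mirror the last three paragraphs of the proof of Proposition \ref{case:neither3} almost line for line, with $A_i, B_i$ both replaced by $D_i$.
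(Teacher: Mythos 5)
Your construction does not work, and the failure is exactly at the step you flag as the heart of the argument. If $M_i$ is the directional matching $D_i$ of the whole cube (the same $D_i$ on every copy $Q_3^v$) and $N_j$ uses only edges in directions $4,\ldots,3+l$, then the union $M_i \cup N_j$ contains no edges at all in the two directions of $\{1,2,3\}\setminus\{i\}$; those two coordinates are therefore constant along any walk in $M_i \cup N_j$, so the union splits into four components and cannot be a Hamilton cycle (a Hamilton cycle of $Q_{3+l}$ must cross the cut given by each direction at least twice). Tracing your walk confirms this: after one tour of the Hamilton cycle $X_j \cup Y_j$ you re-enter the home copy at $D_i(u)$, and the very next $M_i$-edge takes you to $D_i(D_i(u)) = u$, closing a cycle of length $2\cdot 2^l$, not $2^{3+l}$. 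The ``factor of two'' you invoke is precisely what is missing: in Proposition \ref{case:neither3} each tour advances the walk two steps along the Hamilton cycle $A_i \cup B_i$ of $Q_k$, so $2^{k-1}$ tours are needed, whereas $D_i \cup D_i$ is just a perfect matching of $Q_3$, so the walk closes after a single tour. (Your count is also internally inconsistent: there are only $2^l$ copies $Q_3^v$, and even two tours would give $4\cdot 2^l$ vertices, not $2^{3+l}$.)

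This obstruction is exactly why the $k=3$ case requires a genuinely different construction. The paper keeps the Corollary \ref{cor:2Stong} matchings of $Q_l$ for the directions $4,\ldots,3+l$, but distributes them unevenly over the eight copies $Q_l^u$ ($A_i$ on $Q_l^\emptyset$ and the copies indexed by sets of size $2$ or $3$, $B_i$ on the three singleton copies), and, crucially, its matchings in directions $1,2,3$ are \emph{not} single directional matchings: the paper's $N_j$ uses $X_j$ on $Q_3^v$ for $v$ odd, $X_{j+1}$ for $v$ even and nonempty, and $X_{j+2}$ on $Q_3^\emptyset$, so that each union uses all three of the directions $1,2,3$. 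Hamiltonicity is then verified not by the single-walk count of Proposition \ref{case:neither3} but by deleting the four edges in $Q_3^\emptyset$ and checking that what remains is four paths whose lengths sum to $2^{3+l}$. Any repair of your approach must similarly vary the matchings from copy to copy so as to break the coordinate invariance above; using one $D_i$ globally can never work.
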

\begin{proof}
Using Corollary \ref{cor:2Stong}, partition $Q_l$ into matchings $A_1,A_2,\ldots,A_l$ and $B_1,B_2,\ldots, B_l$ such that $A_i\cup B_i$ is a Hamilton cycle for all $j$. 
Let $X_1, X_2$ and $X_3$ be the three directional matchings of $Q_3$ -- that is, $X_j$ contains all edges in direction $j$.

For $i=1,2,\ldots,l$ define $M_i$ to be the matching on $Q_{3+l}$ defined by taking the following edges:
$$\begin{cases}
A_i & \text{ on } Q_l^{\emptyset}, Q_l^{\{1,2\}}, Q_l^{\{1,3\}}, Q_l^{\{2,3\}}  \text{ and } Q_l^{\{1,2,3\}} \\
B_i & \text{ on } Q_l^{\{1\}}, Q_l^{\{2\}} \text{ and } Q_l^{\{3\}}
\end{cases}
$$

For $j = 1,2,3$ define $N_j$ to be the matching on $Q_{3+l}$ defined by taking the following edges, where the subscripts for the $X$s are taken modulo $3$:
$$\begin{cases}
X_{j} & \text{ on } Q_3^v \text{ for $v$ odd} \\
X_{j+1} & \text{ on } Q_3^v \text{ for $v$ even and } v\ne \emptyset \\
X_{j+2} & \text{ on } Q_3^{\emptyset}
\end{cases}
$$

\begin{figure}[h]
    \centering
    \begin{subfigure}[b]{0.28\textwidth}
       \includegraphics[width=\textwidth]{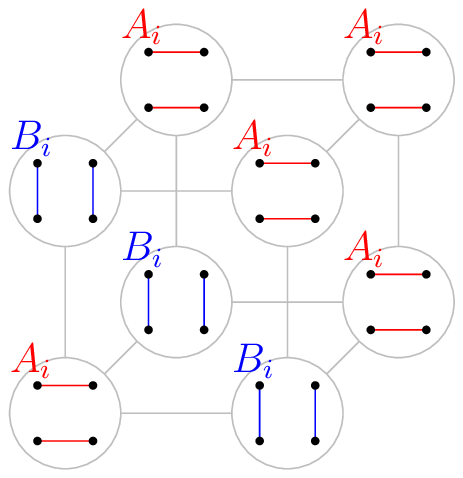}
        \caption{$M_i$}
    \end{subfigure}   
    \quad
    \begin{subfigure}[b]{0.28\textwidth}
        \includegraphics[width=\textwidth]{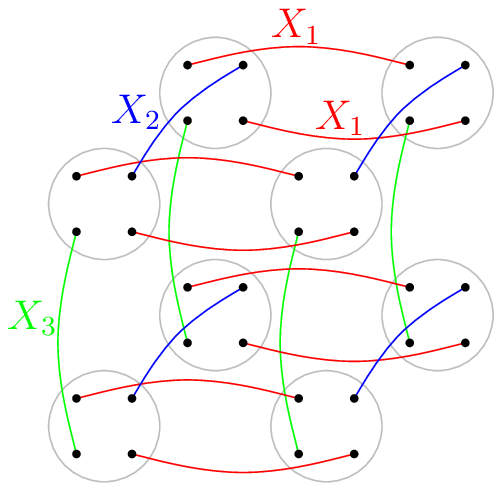}
        \caption{$N_1$}
    \end{subfigure}
    \qquad
    \begin{subfigure}[b]{0.28\textwidth}
        \includegraphics[width=\textwidth]{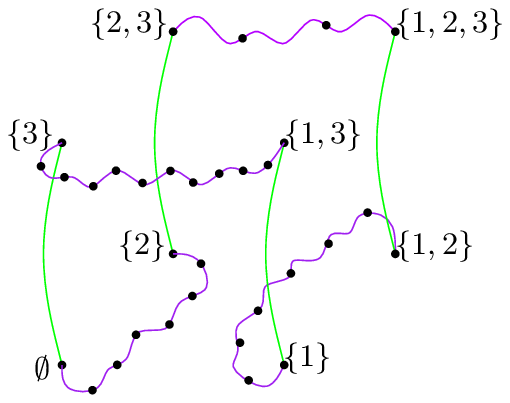}
		\caption{Sketch of cycle ${M_i\cup N_1}$}
		\label{subfig:sketch}
    \end{subfigure}
    \caption{An example when $l=2$}\label{fig:l=3}
\end{figure}

Now $\{M_i\}_{i=1}^l \cup \{N_j\}_{j=1}^3$ is a set of $3+l$ disjoint perfect matchings. It remains to show that $M_i \cup N_j$ is a Hamilton cycle for any $i$ and $j$.

Note that $\{M_i\}$ is invariant under the permutation that cycles directions 1,2 and 3. Since $N_2$ and $N_3$ are obtained from $N_1$ by such cyclic permutations, we can without loss of generality assume that $j=1$.

Consider $M_i \cup N_1$ with the edges in $Q_3^{\emptyset}$ removed; that is, the edges $\emptyset \{3\}$, $\{1\}\{1,3\}$, $\{2\}\{2,3\}$ and $\{1,2\}\{1,2,3\}$.
We will show that the resulting graph comprises four paths, from $\emptyset$ to $\{2\}$, from $\{2,3\}$ to $\{1,2,3\}$, from $\{1,2\}$ to $\{1\}$ and from $\{1,3\}$ to $\{3\}$. Thus when we add back the four edges in direction 3, we get a Hamilton cycle. See figure \ref{subfig:sketch} for an example.

View $Q_{3+l}$ as an $l$-dimensional hypercube whose `vertices' are copies of $Q_3$. Starting at a vertex in $Q_3^\emptyset$ and following the path from it, we will not return to $Q_3^\emptyset$ until we have made $2^l$ steps around $A_i \cup B_i$.

A path starting at $\emptyset$ will move in directions according to $A_i$ then $X_1$ then $B_i$ then $X_2$ and then repeat this pattern. It will return to $Q_3^\emptyset$ after $2^l$ moves from $A_i \cup B_i$ and $2^l- 1$ moves from $X_1 \cup X_2$. Since $l \ge 2$, this means we end at the vertex $\{2\}$, and the path contains $2(2^l)$ vertices.

The same argument works to show that there is a path from $\{1,2\}$ to $\{1\}$ containing $2(2^l)$ vertices.

A path starting at $\{2,3\}$ will move in directions according to $A_i$ then $X_1$ then $A_i$, ending at the vertex $\{1,2,3\}$ and containing $4$ vertices.

A path starting at $\{1,3\}$ will move in directions according to $A_i,  X_1, B_i, X_2, A_i, X_1, A_i, X_2$, and then repeat this pattern. 
It will return to $Q_3^\emptyset$ after $2(2^l)-2$ moves from $A_i \cup B_i$ and $2(2^l)- 3$ moves from $X_1 \cup X_2$. Thus we end at the vertex $\{3\}$, and the path contains $4(2^l)-4$ vertices.

The sum of the lengths of these paths is $8(2^l)$, and so every vertex is contained in one of these paths.
\end{proof}

\begin{prop} There is a 1-factorization $\mathcal{M}$ of the hypercube $Q_{4}$ such that $\GM$ is isomorphic to the complete bipartite graph $K_{3,1}$.
\label{case:k=3,l=1}
\end{prop}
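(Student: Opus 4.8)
The plan is to prove Proposition~\ref{case:k=3,l=1} by exhibiting an explicit 1-factorization $\mathcal{M} = \{M_1, M_2, M_3, N_1\}$ of $Q_4$ and verifying that $M_i \cup N_1$ is a Hamilton cycle for each $i \in \{1,2,3\}$, while $M_i \cup M_j$ need not be (indeed, by Theorem~\ref{thm:Bipartite} it cannot be, since we want $\GM \cong K_{3,1}$, and $K_{3,1}$ is the unique bipartite graph on $4$ vertices with a degree-$3$ vertex and the other three of degree $1$). So the content is purely the explicit construction plus three routine cycle-tracings.

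First I would set up coordinates: label the $16$ vertices of $Q_4$ as subsets of $\{1,2,3,4\}$, or equivalently as binary strings of length $4$. The natural starting point is to mimic the structure of Propositions~\ref{case:neither3} and~\ref{case:k=3}: view $Q_4$ as $Q_3 \times Q_1$, i.e.\ two copies $Q_3^{\emptyset}$ and $Q_3^{\{4\}}$ of $Q_3$ joined by the direction-$4$ perfect matching. Let $N_1 = D_4$ be that perfect matching (all edges in direction $4$). Then each $M_i$ should restrict to a perfect matching on each of the two $Q_3$-copies, using only directions $1,2,3$, such that $M_i \cup D_4$ is a Hamilton cycle. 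A perfect matching of $Q_3$ using directions $1,2,3$ that is ``Hamilton-compatible'' with the connecting matching is exactly what one needs: if $P$ is the matching on $Q_3^{\emptyset}$ and $P'$ the matching on $Q_3^{\{4\}}$, then $M_i \cup D_4$ is a single $16$-cycle iff the permutation $P' \circ P$ (acting on the vertices of $Q_3$, composed with the identity bijection between the two copies) is an $8$-cycle. So I need three matchings $P_1, P_2, P_3$ of $Q_3$ (each a perfect matching, pairwise edge-disjoint so that together with some fourth matching they partition $E(Q_3)$) and three matchings $P_1', P_2', P_3'$ likewise, with each $P_i' P_i$ an $8$-cycle on $Q_3$'s vertices, and with $\{P_i\} \cup \{P_i'\}$ consistent with $E(Q_3) \cup E(Q_3) \cup E(D_4) = E(Q_4)$ being partitioned into exactly the four matchings $M_1, M_2, M_3, N_1$. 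Since $Q_3$ has $12$ edges and three perfect matchings of $Q_3$ have $4+4+4 = 12$ edges, the three $P_i$ must partition $E(Q_3)$, and likewise the three $P_i'$; the directional matchings $D_1^{Q_3}, D_2^{Q_3}, D_3^{Q_3}$ are one such partition but give $4$-cycles, so I would pick a different partition — or, more cleverly, take $P_i = D_i$ (directional on the first copy) and $P_i'$ a suitably permuted/twisted version on the second copy so that $P_i' P_i$ becomes an $8$-cycle. Concretely, applying a fixed direction-permuting symmetry (say the $3$-cycle $(1\,2\,3)$ on directions) to the second copy turns the union $D_i \cup (\text{twisted }D_i)$ into longer cycles; one checks whether it yields an $8$-cycle for all three $i$ simultaneously, and if not, adjusts by also translating the second copy by a fixed vertex of $Q_3$.

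The main obstacle, such as it is, is just finding the right explicit example — there is no deep difficulty, since $Q_4$ is small enough to search by hand or by a short computation, and the discussion in Section~\ref{sec:direction} signals that $(3,1)$ is a genuine but non-structural exception (it fails the ``direction-respecting'' framework, so the construction cannot be the uniform one used for larger $l$). Once the matchings are written down, the verification is mechanical: for each $i$ I would simply trace the cycle in $M_i \cup N_1$ starting from $\emptyset$, alternating $M_i$-edges and $N_1$-edges, and confirm it visits all $16$ vertices before closing up. I would present the three matchings as explicit lists of edges (or as a $16 \times 4$ incidence-style table, or by drawing the two $Q_3$-cubes with the twist between them), state that $N_1 = D_4$, and then display the three Hamilton cycles explicitly as vertex sequences, leaving the reader to check edge-membership. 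Finally I would note that $\GM$ has the edges $M_iN_1$ for $i=1,2,3$ and no others are required (and cannot all be present by Theorem~\ref{thm:Bipartite} anyway), so $\GM \cong K_{3,1}$ as desired.
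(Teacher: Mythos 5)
Your overall format (give an explicit 1-factorization, trace the three cycles $M_i\cup N_1$, and use bipartiteness of $\GM$ to conclude $\GM\cong K_{3,1}$) is the same as the paper's, whose proof is simply an explicit figure plus a symmetry remark. However, the construction template you commit to cannot produce such an example. By taking $N_1=D_4$ and requiring each $M_i$ to restrict to perfect matchings $P_i$, $P_i'$ of the two $Q_3$-layers (so that, as you note, $\{P_1,P_2,P_3\}$ and $\{P_1',P_2',P_3'\}$ each partition $E(Q_3)$), you force $\mathcal{M}$ to be a \emph{direction respecting} 1-factorization with $k=3$, $l=1$; Theorem~\ref{thm:why3hard} proves that no direction respecting 1-factorization of $Q_4$ has $\GM\cong K_{3,1}$. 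This is precisely why $(3,1)$ is singled out in Section~\ref{sec:direction}, and why the paper's example in Figure~\ref{fig:3-1} is expressly \emph{not} direction respecting: its matchings mix direction-$4$ edges with edges in directions $1,2,3$. Your proposed ``twists'' (applying a coordinate permutation to, or translating, the second layer) never leave the direction-respecting class, so no adjustment of that kind can rescue the plan -- indeed your own closing remark that $(3,1)$ ``fails the direction-respecting framework'' contradicts the construction you actually propose.

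There is also a local error in your Hamiltonicity criterion, which is a symptom of the same obstruction. Tracing the alternating cycle shows that $M_i\cup D_4$ is a single $16$-cycle if and only if $P_i\cup P_i'$ is a single $8$-cycle on $V(Q_3)$, which is equivalent to $P_i'\circ P_i$ being a product of two $4$-cycles, not an $8$-cycle; in fact $P_i'\circ P_i$ can never be an $8$-cycle, since each perfect matching of $Q_3$ is a product of four transpositions (an even permutation) while an $8$-cycle is odd, so your stated condition is vacuous. Even after correcting it, the requirement for all three $i$ simultaneously, subject to the two layer-partitions, is exactly what the sign argument of Theorem~\ref{thm:why3hard} (or a short case check using the fact that $Q_3$ has only four 1-factorizations, each containing a directional matching) rules out. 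To prove Proposition~\ref{case:k=3,l=1} you must abandon the layered ansatz and, as the paper does, exhibit a 1-factorization in which the direction-$4$ edges are spread over several of the four matchings, then verify one pair $M_i\cup N_1$ explicitly and appeal to symmetry for the rest.
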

\begin{proof}
The four matchings are shown in figure \ref{fig:3-1}. It is easy to check that the top matching forms a Hamilton cycle with any of the three bottom matchings (in fact, by symmetry you need only check one pair).
\begin{figure}[h!]
    \centering
    \includegraphics[width=.8\textwidth]{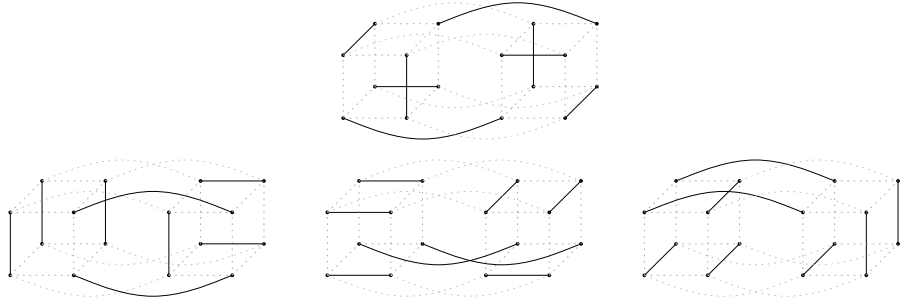}
    \caption{The matchings for $k=1$ and $l=3$}\label{fig:3-1}
\end{figure}
\end{proof}

\begin{proof}[Proof of Theorem \ref{thm:main}]
Combine the results of Propositions \ref{case:neither3}, \ref{case:k=3} and \ref{case:k=3,l=1}.
\end{proof}

\section{Direction Respecting 1-Factorizations}
\label{sec:direction}

The only case not covered by Theorem \ref{thm:main} is whether $\GM$ can be isomorphic to $K_{3,3}$. This case cannot be resolved with our methods alone. To explain why, we will introduce a notion of direction respecting 1-factorizations. 

Fix $k$ and $l$ and let $\mathcal{M} = M_1,M_2,\ldots M_{k+l}$ be a 1-factorization of $Q_{k+l}$. We call the 1-factorization $\mathcal{M}$ \emph{direction respecting} if $M_1,M_2,\ldots, M_k$ only use edges in directions $1,\ldots,k$
 and $M_{k+1},M_{k+2},\ldots M_{k+l}$ only use edges in directions $k+1,\ldots k+l$.
 
 Note that the matchings constructed in Propositions \ref{case:neither3} and \ref{case:k=3} were direction respecting for the appropriate $k$ and $l$. However, the 1-factorisation given in the proof of proposition \ref{case:k=3,l=1} was not direction respecting. We shall prove that there is no direction respecting 1-factorization $\mathcal{M}$ with $\GM$ isomorphic to $K_{3,3}$ or $K_{3,1}$.

\begin{theorem} \label{thm:why3hard}
There is a direction respecting 1-factorization $\mathcal{M}$ of $Q_{k+l}$ with $\GM = K_{k,l}$ if and only if $k,l$ are not $3,1$ or $3,3$. 
\end{theorem}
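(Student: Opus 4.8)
The plan is to prove the two directions separately. The "if" direction is essentially already done: Propositions \ref{case:neither3} and \ref{case:k=3} produce direction respecting 1-factorizations for $(k,l)$ with neither equal to $3$ and for $k=3$, $l \notin \{1,3\}$ (and symmetrically $l=3$, $k \notin\{1,3\}$), so only the "only if" direction — that no direction respecting 1-factorization works when $(k,l) \in \{(3,1),(1,3),(3,3)\}$ — requires real work. By symmetry it suffices to rule out $(k,l)=(3,1)$ and $(k,l)=(3,3)$.

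The key idea will be a sign/parity argument in the spirit of the proof of Theorem \ref{thm:Bipartite}, but now tracking how a matching $M_i$ using only directions $1,\dots,k$ acts on the copies $Q_k^v$. Suppose $\mathcal{M}$ is direction respecting with $M_1,\dots,M_k$ in directions $1,\dots,k$ and $N_1,\dots,N_l$ in directions $k+1,\dots,k+l$. First I would observe that each $M_i$ restricts, on every copy $Q_k^v$, to a perfect matching $M_i^v$ of $Q_k$, and that as $v$ ranges over $Q_l$ these restrictions need not agree — but the whole collection $\{M_i^v : i\}$ must be a $1$-factorization of that copy $Q_k$ only in the case $l$ forces it; in general one only knows the $M_i$ are pairwise disjoint. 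The crucial structural fact I would extract is: for $M_i \cup N_j$ to be a Hamilton cycle of $Q_{k+l}$, following the cycle one alternates an $M_i$-step (staying within some $Q_k^v$) and an $N_j$-step (moving between copies along the fixed perfect matching that $N_j$ induces on $Q_l$, with the copy chosen depending only on parity as in Proposition \ref{case:neither3}). Contracting each copy $Q_k^v$ to a point, the $N_j$-edges project to a $2$-factor of $Q_l$; for the lift to be a single $2^{k+l}$-cycle one needs, among other things, that this projection is a Hamilton cycle of $Q_l$, i.e. $N_j$ "looks like" the $N_j$ of Proposition \ref{case:neither3}. Then a sign computation analogous to the displayed one in Theorem \ref{thm:Bipartite}'s proof, carried out on the permutations of the vertex set of a single copy $Q_k$ that arise from traversing the projected Hamilton cycle, yields a parity obstruction precisely when $k=3$ (so that $Q_k$ has $8$ vertices, $\binom{\text{odd}}{}$ behaviour), forcing $l$ to avoid $1$ and $3$.

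Concretely, for $(k,l)=(3,1)$: here $N_1$ is a perfect matching of $Q_4$ in direction $4$ only, so $M_i\cup N_1$ a Hamilton cycle means the four edges of $N_1$ together with $M_i^\emptyset$ and $M_i^{\{4\}}$ (two perfect matchings of $Q_3$) form an $8$-cycle — equivalently $M_i^{\{4\}} \circ (M_i^\emptyset)^{-1}$, composed with the identification across direction $4$, is an $8$-cycle, i.e. $M_i^{\{4\}}$ and $M_i^\emptyset$ are a "perfect pair" of $1$-factors of $Q_3$. I would then show that one cannot choose three pairwise-disjoint perfect matchings $X_1,X_2,X_3$ of $Q_3$ on copy $Q_3^\emptyset$ and three more $Y_1,Y_2,Y_3$ on $Q_3^{\{4\}}$ with $X_i\cup Y_i$ a Hamilton cycle for each $i$ and $\{X_i\}$, $\{Y_i\}$ each a $1$-factorization of $Q_3$ — a finite check, but I would prefer to derive it from the sign argument: summing signs around the situation forces a $(-1)$ somewhere, exactly as in Theorem \ref{thm:Bipartite}. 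For $(k,l)=(3,3)$ the same machinery applies: reduce to the statement that $Q_3$ cannot carry three $1$-factorizations interacting in the required Hamiltonian way along a Hamilton cycle of the "outer" $Q_3$, and extract the contradiction from the parity of $8$-cycles being odd permutations together with the odd length of the relevant closed walk.

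The main obstacle I anticipate is the first half: rigorously proving that a direction respecting $1$-factorization with $\GM=K_{k,l}$ is forced to have the product structure of Propositions \ref{case:neither3}–\ref{case:k=3} — in particular that each $N_j$ projects to a Hamilton cycle of $Q_l$ and each $M_i$'s restrictions across copies are "rigid" enough — since a priori the matchings could be much wilder than the constructed examples. I expect this rigidity to follow from analyzing, for a fixed pair $M_i \cup N_j$, the contraction of the $Q_k^v$'s and using connectivity of the Hamilton cycle together with the fact that $N_j$ only moves between a copy and its images under a perfect matching of $Q_l$; once that structure is pinned down, the sign argument of Theorem \ref{thm:Bipartite} does the rest and pinpoints $k=3$, $l\in\{1,3\}$ as the only bad cases.
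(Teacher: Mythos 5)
There is a genuine gap, and it sits exactly where you anticipated. Your ``only if'' direction rests on the rigidity claim that in any direction respecting $1$-factorization with $\GM=K_{k,l}$ each $N_j$ must look like the $N_j$ of Proposition \ref{case:neither3} (restrictions to the copies $Q_l^u$ given by a fixed pair of matchings chosen by the parity of $u$, projecting to a Hamilton cycle of $Q_l$). You never prove this, and the paper neither proves nor needs it: a priori the restrictions of $N_j$ to the $2^k$ copies of $Q_l$ can be $2^k$ different perfect matchings, and Hamiltonicity of $M_i\cup N_j$ only forces a connectivity condition on the contracted multigraph, not the product structure. Moreover, the parity heuristic you describe cannot by itself ``pinpoint'' $k=3$, $l\in\{1,3\}$: the same sign obstruction (a product of oddly many odd permutations) would apply verbatim to, say, $K_{3,5}$ or $K_{5,5}$, where direction respecting factorizations \emph{do} exist by Propositions \ref{case:neither3} and \ref{case:k=3}. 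So whatever argument you run must use something special about $Q_1$ and $Q_3$ beyond parity, and your sketch does not identify what that is. (There are also small slips in the $(3,1)$ reduction: $N_1=D_4$ has $2^3$ edges and the relevant cycle has $16$ vertices; the correct reduction is that the two restrictions of $\{M_1,M_2,M_3\}$ to $Q_3^{\emptyset}$ and $Q_3^{\{4\}}$ would form two $1$-factorizations of $Q_3$ paired into Hamilton cycles, which a finite check rules out -- that part is salvageable.)

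The paper's route supplies exactly the missing ingredients and avoids rigidity altogether. It defines a global invariant $\sign(\mathcal{M})=\prod_{i<j}\sign\bigl(M_iM_j^{-1}\bigr)$, shows that $\GM=K_{3,l}$ forces $\sign(\mathcal{M})=(-1)^{3l}=-1$ while the directional factorization $\mathcal{D}^{(d)}$ has sign $+1$, and then introduces a $4$-cycle switching operation that preserves the sign. The special fact about dimension $3$ (and trivially $1$) is that $Q_3$ has only four $1$-factorizations, all reachable from the directional one by switches; since a direction respecting factorization restricts to a $1$-factorization of $Q_3$ on every copy $Q_3^v$ (and likewise on the copies in directions $4,\dots,d$ when $l=3$), it is switch-equivalent to $\mathcal{D}^{(d)}$ copy by copy, hence has sign $+1$, a contradiction. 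This is precisely the step that fails for $l=5$ etc., which is why the theorem's exceptional set is $\{(3,1),(3,3)\}$ and not all odd $kl$. To repair your proposal you would need either to prove your structural rigidity claim (which is doubtful as stated) or to replace it with a switch-invariance argument of this kind together with the classification of $1$-factorizations of $Q_3$.
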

\begin{proof}
First note that the proof of Theorem \ref{thm:main} shows that such a 1-factorization exists when $k,l$ are not $1,3$ or $3,3$.

Let $d = 3+l$ where $l$ is 3 or 1.
For $M$ a perfect matching on $Q_d$, think of $M$ as a bijection from the odd vertices of $Q_d$ to the even vertices (as in Theorem \ref{thm:Bipartite}). If $M$ and $N$ are perfect matchings then $M N^{-1}$ is a permutation on the even vertices of $Q_d$.
We define the \emph{sign} of a 1-factorization $\{M_i\}_{i=1}^{d}$ of $Q_{d}$ to be the product of the signs of the permutations $M_i M_j^{-1}$ for all $i < j$. That is,
$$\sign(\mathcal{M}) = \prod_{i<j} \sign\left(M_i M_j^{-1}\right).$$

Let $\mathcal{D}^{(d)} = \{D^{(d)}_i\}_{i=1}^{n}$ be the directional matchings of $Q_d$, where $D^{(d)}_i$ contains all edges in direction $i$. For $i \ne j$ the permutation $D^{(d)}_i \left(D^{(d)}_j\right)^{-1}$ consists of $2^{d-2}$ disjoint 4-cycles. Thus $\sign\left(D^{(d)}_i \left(D^{(d)}_j \right)^{-1}\right) = (-1)^{2^{d-2}} = 1$ for all $i,j$, and so $\sign(\mathcal{D}^{(d)})=1$.

Suppose
$\mathcal{M} = \{M_i\}_{i=1}^3\cup\{N_j\}_{j=1}^l$ 
is a 1-factorization of $Q_d$ where $M_i \cup N_j$ is a Hamilton cycle for all $i,j$. The permutation $M_i N_j^{-1}$ is a cycle of length $2^{d-1}$ and so has sign $-1$. Note that $\sign\left(M_i M_s^{-1}\right) = \sign\left(M_i N_j^{-1}\right)\left((M_s N_j^{-1})^{-1}\right) = (-1)(-1) = 1$, and similarly $\sign(N_j N_t^{-1}) = 1$.
Thus $\sign(\mathcal{M}) = (-1)^{3l} = -1$ for any $\mathcal{M}$ with $\GM = K_{3,l}$. 

We will define a switching operation on 1-factorizations that preserves their sign. We will further show that any direction respecting 1-factorization $\mathcal{M}$ can be obtained from $\mathcal{D}^{(d)}$ using a series of switches. Since the sign of $\mathcal{D}^{(d)}$ is $1$, this is enough to show that $\GM \ne K_{3,l}$.

Let $\mathcal{M} = \{M_i\}_{i=1}^d$ be a 1-factorization of $Q_d$. Take a 4-cycle $x,y,v,w$ in $Q_{d}$ and suppose that the edges $xy$ and $vw$ are in matching $M_s$ and $vy$ and $xw$ are in matching $M_t$. A switch on $w,v,y,w$ replaces $\mathcal{M}$ by the 1-factorization $\mathcal{M'} = \{M'_i\}_{i=1}^n$ where 
 $M'_s = M_s  \cup \{ vy, xw\} \setminus \{ xy, vw\}$, $M_t' = M_t \cup \{ xy, vw\} \setminus \{vy, xw\}$, and $M'_i = M_i$ for $i \ne s,t$. 
 
Viewing the 1-factors $M_i$ as bijections from the even vertices to the odd vertices, we have composed $M_s$ and $M_t$ with the function swapping $x$ and $v$, where $x$ and $v$ are the even vertices of $x,y,v,w$. Therefore the permutations $M_s M_i^{-1}$ and $M'_s M_i^{-1}$, where $i \ne s,t$, differ from each other by the transposition $(a,c)$ and so have opposite sign. Similarly $M_t M_i^{-1}$ and $M'_t M_i^{-1}$ have opposite sign.

From this second interpretation of the switch it is clear that:
\begin{align*}
\sign(\mathcal{M'}) &= \prod_{i<j} \sign\left(M'_i (M'_j)^{-1}\right))
\\
&= \prod_{\substack{\text{exactly one of} \\ \text{$i,j$ is $s$ or $t$}}} - \sign\left(M_i M_j^{-1}\right) 
\prod_{\substack{\text{neither or both of} \\ \text{$i,j$ is $s$ or $t$}}} \sign\left(M_i M_j^{-1}\right) 
\\
&= (-1)^{2(d-2)} \sign(\mathcal{M}) =  \sign(\mathcal{M})
\end{align*}

All that is left to show is that a 1-factorization satisfying the conditions of the theorem can be obtained from $\mathcal{D}^{(d)}$ by a series of switches. We will use the following claim.

\begin{claim} Let $D^{(3)}_1,D^{(3)}_2,D^{(3)}_3$ be the directional matchings on $Q_3$ and let $A_1,A_2,A_3$ be another 1-factorization of $Q_3$. 
Then there are a series of switches that transform $D^{(3)}_1$, $D^{(3)}_2$, $D^{(3)}_3$ into $A_1,A_2,A_3$, respecting the ordering.
\end{claim}
\begin{proof}[Proof of Claim]
It is easy to check that there are only 4 ways to partition $Q_3$ into perfect matchings, up to ordering -- one way uses three directional matchings and the other three ways each use one directional matching. Without loss of generality say that $A_1$ is a directional matching.

Note that we can use switches to re-order $D^{(3)}_1,D^{(3)}_2,D^{(3)}_3$. 
To swap $D^{(3)}_i$ and $D^{(3)}_j$ switch on $\emptyset, \{i\}, \{j\}, \{i,j\}$
and on $\{k\}, \{i,k\}, \{j,k\}, \{i,j,k\}$, where $i,j,k$ is $1,2,3$ in some order.
Thus we can assume without loss of generality that $A_1 = D^{(3)}_1$.

If $A_2$ and $A_3$ are also directional matchings then we are done. If not, then we can switch on $\emptyset, \{2\},\{2,3\},\{3\}$ to make them both directional matchings. 
\end{proof}

Let $\mathcal{M} = \{M_i\}_{i=1}^3\cup\{N_j\}_{j=1}^l$ 
be a 1-factorization of $Q_d$ satisfying the conditions of the theorem. 

As in Theorem \ref{thm:main}, we can view $Q_{3+l}$ as an $l$ dimensional hypercube whose `vertices' are copies of $Q_3$. 
For $v \subset \{3+1,\ldots, 3+l\}$ 
let $Q_3^v$ be the induced subgraph of $Q_{3+l}$ on vertices of the form $u \cup v$ for all 
$u \subset \{1,2,3\}$.
For each $v$ in turn, apply the claim to $Q_3^v$ and  $M_1,M_2,M_3$ restricted to $Q_3^v$. In this way we obtain a series of switches that turns $D^{(d)}_1,D^{(d)}_2,D^{(d)}_3$ into $M_1,M_2,M_3$.

If $l= 1$, $N_1 = D^{(n)}_4$ and we are done. If $l = 3$, apply an analagous process to above to find switches that turn $D^{(d)}_4,D^{(d)}_5,D^{(d)}_6$ into $N_1,N_2,N_3$. Note that these switches will be only on edges in directions 4,5,6 and so will not interfere with $M_1,M_2,M_3$ in any way.
\end{proof}

\section{Open Questions}
\label{sec:questions}
The most obvious question is the missing case from Theorem \ref{thm:main}.
\begin{question}
Is it possible to find a 1-factorization $\mathcal{M}$ of $Q_6$ such that $\GM = K_{3,3}$? 
\end{question}
Theorem \ref{thm:why3hard} and its proof show that any such matching $\mathcal{M}$ cannot be obtained from applying a series of switches to the directional matchings. However, there is an example where $\GM = K_{3,1}$, and computer checking suggests that in 4 or 5 dimensions there are many other ways to 1-factorize $Q_d$ and get complete bipartite graphs than the way shown in this paper.

We know from Theorem \ref{thm:Bipartite} that we cannot have a perfect 1-factorization of $Q_d$ for $d>2$. In fact, the maximum possible number of pairs of 1-factors whose union forms a Hamilton cycle is $\left\lfloor \frac{d^2}{4} \right\rfloor$, obtained when $\GM = K_{\lfloor d/2 \rfloor,\lceil d/2 \rceil}$. What can be said about the other pairs -- can their union be close to a Hamilton cycle in some way?
\begin{question}
Let $\mathcal{M} = \{M_i\}_{i=1}^d$ be a 1-factorization of $Q_d$. 
Is it possible for $M_i \cup M_j$ to contain a cycle of length $(1-o(1))2^d$ for every $i\ne j$?
\end{question}
\begin{question}
Let $\mathcal{M} = \{M_i\}_{i=1}^d$ be a 1-factorization of $Q_d$. Is it possible for $M_i \cup M_j$ to consist of at most 2 cycles for every $i\ne j$?
\end{question}
Computer checking shows that for $n \le 5$ the answer to the latter question is `yes' and in dimensions 4 and 5 there are actually several different 1-factorizations that work. 

One could also phrase more general versions of these questions in terms of finding bounds on an appropriate minimax or maximin function. For example,
\begin{question}
For a 1-factorization $\mathcal{M} = \{M_i\}_{i=1}^d$  of $Q_d$, 
let $c_{i,j}$ be the length of the longest cycle in $M_i \cup M_j$ 
and let $f(\mathcal{M}) = min_{i\ne j}(c_{i,j})$. 
Can one find bounds on $max_{\mathcal{M}}(f(\mathcal{M}))$ in terms of $d$?
\end{question}
We can prove that $max_{\mathcal{M}}(f(\mathcal{M}))$ is non-decreasing with $d$. We suspect that it grows exponentially in $d$, but we cannot yet prove it is even better than constant. 

A different way of thinking of Hamilton cycles is as connected 2-factors. Thus a different generalisation of the problem would be to ask about the connectivity of other $r$-factors. For example,

\begin{question} For each $d$, let $r=r(d)$ be minimal subject to there existing a 1-factorization $\mathcal{M}$ of $Q_d$ where the union of any $r$ distinct 1-factors is connected. What is the value of $r(d)$?
\end{question}

Theorem \ref{thm:Bipartite} shows that $r(d)$ is greater than $2$ for $d>2$. 
The 1-factorization given by Theorem \ref{thm:main} in the case $k = \left\lfloor \frac{d}{2} \right\rfloor$ and $l = \left\lceil \frac{d}{2} \right\rceil$ has the property that the union of any $\left( \left\lceil \frac{d}{2} \right\rceil + 1 \right)$ 1-factors is connected, hence $r(d) \le \left\lceil \frac{d}{2} \right\rceil + 1$ for $d \ne 6$. It seems possible that $r$ is constant and it could be even as small as 3.

\nocite{*}
\bibliographystyle{alpha}
\bibliography{factorising}{}

\end{document}